\def\rr{{\mathbb R}}
\def\nn{{\mathbb N}}
\def\hh{{\mathbb H}}
\def\ch{\mathcal H}
\def\hn{{{\mathbb H}^n}}
\def\car{{\mathcal R}}
\def\fz{\infty}
\def\az{\alpha}
\def\dz{\delta}
\def\ez{\epsilon}
\def\gz{{\gamma}}
\def\tz{\theta}
\def\wz{\widetilde}
\def\bint{{\ifinner\rlap{\bf\kern.35em--}
\int\else\rlap{\bf\kern.45em--}\int\fi}\ignorespaces}
\def\bbint{{\ifinner\rlap{\bf\kern.35em--}
\hspace{0.078cm}\int\else\rlap{\bf\kern.45em--}\int\fi}\ignorespaces}
\def\lr{\right}
\def\lf{\left}
\def\la{\langle}
\def\ra{\rangle}
\newtheorem{thm}{Theorem}[section]
\newtheorem{lem}[thm]{Lemma}
\newtheorem{rem}[thm]{Remark}
\newtheorem{defn}[thm]{Definition}
\newtheorem{prob}[thm]{Problem}
\numberwithin{equation}{section}
\begin{document}

\arraycolsep=1pt

\title{\Large\bf
On the Dimension of Kakeya Sets in the First Heisenberg Group
\footnotetext{\hspace{-0.35cm}
\endgraf
 2010 {\it Mathematics Subject Classification:} Primary 28A75; Secondary 28A78 $\cdot$ 28A80
 \endgraf {\it Key words and phrases:}    Kakeya set, Hausdorff dimension, Heisenberg group.
 \endgraf J. L. is supported by the Academy of Finland via the projects: Quantitative rectifiability in Euclidean and non-Euclidean spaces, Grant No. 314172, and Singular integrals, harmonic functions, and boundary regularity in Heisenberg groups,
Grant No. 328846.
}}
\author{Jiayin Liu}
\date{}
\maketitle

\begin{center}
\begin{minipage}{13.5cm}\small
{\noindent{\bf Abstract.}We define Kakeya sets in the Heisenberg group
and show that the Heisenberg Hausdorff dimension of Kakeya sets in the first Heisenberg group is at least 3. This lower bound is sharp since, under our definition, the $\{xoy\}$-plane is a Kakeya set with
Heisenberg Hausdorff dimension 3.
}
\end{minipage}
\end{center}


\section{Introduction}

The study of Kakeya sets in Euclidean space is one of the central topics in geometric measure theory.
A set $E \subset\rr^n$ is a Kakeya set if for every $e \in S^{n-1}$ there exists a unit
line segment $I_e$ parallel to $e$ such that $I_e \subset E$.

A natural question is to determine the least Hausdorff dimension of Kakeya sets.

The answer is known in 2-dimensional Euclidean space. Indeed,
Kakeya sets in $\rr^2$ turn out to be of Hausdorff dimension equal to 2 which can be shown by multiple ways. See \cite{d71,f85,m15}.
However, for Kakeya sets in higher dimensional Euclidean space, the sharp lower bound is not known. We would like to remark some progress: Bourgain used two different methods to provide lower bounds \cite{b91,b99}, which were further improved by Wolff \cite{w95} and Katz-Tao \cite{kt99} respectively.
Recently, Katz-Zahl \cite{kz19} and Guth-Zahl \cite{gz18} enhanced the results of \cite{w95} in $\rr^3$ and $\rr^4$ respectively, which show that the best known lower bound is $5/2+\ez_0$ in $\rr^3$ with $\ez_0$ an absolute constant and $3+1/40$ in $\rr^4$. In $\rr^n$ with $n \ge 5$, the best known lower bound $3+(2 -\sqrt{2} )(n - 4)$ was established in \cite{kt02} by Katz-Tao.

As an analogy to Euclidean Kakeya sets, we can define Kakeya sets in the Heisenberg group. In this paper, we denote by $\hn$ the $n$-th Heisenberg group. When $n=1$, we write $\hh$ instead of $\hh^1$ for simplicity.
\begin{defn} \label{d1}
  A set $E \subset\hh^n$ is a Kakeya set if for every unit line segment $I \subset \rr^{2n} \times \{0\}$ centred at the origin, there exists $q \in \hn$ such that $qI \subset E$.
\end{defn}
\noindent
Here and in what follows, by a unit line segment we mean an isometric copy of the unit open interval $(0,1)$. Moreover, by Heisenberg Hausdorff measure we mean the one induced by the Kor\'{a}nyi metric on the first Heisenberg group. For the definition of the Kor\'{a}nyi metric, we refer the readers to Section 2.

The Heisenberg Hausdorff dimension of Euclidean Kakeya sets has been studied in \cite{v14} where the author showed a lower bound on the dimension of Kakeya sets. In \cite{v17}, the author studied Kakeya sets for general metric spaces in axiomatic sense.

According to Definition \ref{d1}, it is not hard to show that the Euclidean Hausdorff dimension of Kakeya sets in $\hh$ is at least 2. This can be done as follows. Under orthogonal projection to the $\{xoy\}$-plane, every Kakeya set $E$ in $\hh$ becomes a Kakeya set $E'$ in $\rr^2$. Hence the known lower bound of the Euclidean Hausdorff dimension of $E'$ is also the one of $E$ since orthogonal projection in $\rr^3$ is Lipschitz. Moreover, 2 is sharp since the $\{xoy\}$-plane is a Kakeya set in $\hh$ with Euclidean Hausdorff dimension 2. However, the Heisenberg Hausdorff dimension of the $\{xoy\}$-plane is 3 and the orthogonal projection from $\hh$ to the $\{xoy\}$-plane is no longer Lipschitz with respect to the Kor\'{a}nyi metric. Hence to calculate a lower bound of Heisenberg Hausdorff dimension of Kakeya sets seems to be a nontrivial problem.

In this note, we will show the following
\begin{thm} \label{thm1}
  In the first Heisenberg group $\hh$ equipped with the Kor\'{a}nyi metric, every Kakeya set has Heisenberg Hausdorff dimension at least 3 and this lower bound is sharp.
\end{thm}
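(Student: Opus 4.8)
\emph{Reduction.} The lower bound is the whole content, since the $\{xoy\}$-plane already realizes dimension $3$. By Definition \ref{d1}, for each direction $\theta$ there is $q_\theta=(a_\theta,b_\theta,c_\theta)\in\hh$ with $S_\theta:=q_\theta I_\theta\subset E$, where $I_\theta$ is the unit horizontal segment through the origin in direction $(\cos\theta,\sin\theta,0)$; it suffices to prove $\dim_H\big(\bigcup_\theta S_\theta\big)\ge 3$. A direct computation with the group law shows that $S_\theta$ is again a horizontal segment, namely the Euclidean segment with direction $(\cos\theta,\sin\theta,m_\theta)$ and vertical slope $m_\theta=\tfrac12(a_\theta\sin\theta-b_\theta\cos\theta)$, and that its Kor\'anyi length equals its Euclidean parameter length, so $S_\theta$ is $1$-dimensional for the Kor\'anyi metric $d$. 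The orthogonal projection $\pi_0(x,y,t)=(x,y)$ is $1$-Lipschitz from $(\hh,d)$ to $\rr^2$, because the horizontal part of $p^{-1}p'$ has Euclidean norm at most $d(p,p')$; since $\pi_0(S_\theta)$ is the planar unit segment through $(a_\theta,b_\theta)$ in direction $\theta$, the shadow $\pi_0\big(\bigcup_\theta S_\theta\big)$ is a Kakeya set in $\rr^2$ and therefore has dimension $2$. By the Lipschitz property this already gives $\dim_H\big(\bigcup_\theta S_\theta\big)\ge 2$, which is the mechanism behind the elementary lower bound.

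\emph{Where the extra dimension comes from.} The gain of one further unit is purely metric and rests on a dichotomy. Transverse to a horizontal segment the Kor\'anyi metric is parabolic, $d\sim|\Delta t|^{1/2}$ in the vertical direction, so a ball $B(p,\delta)$ meets a non-characteristic flat piece in a set that is thin of order $\delta^2$ in the vertical. This is exactly why the flat plane $\{t=0\}$ is $3$-dimensional: a $\delta$-ball centred at Euclidean distance $\rho$ from the origin meets the plane in a strip of area $\sim\delta^3/\rho$, so that $\sim\delta^{-3}$ balls are needed. For a general $\bigcup_\theta S_\theta$ I would exploit the following trade-off. If the slopes $m_\theta$ are mostly small, then $(a_\theta,b_\theta)$ is nearly radial, the shadow carries positive area, and $\bigcup_\theta S_\theta$ behaves like a genuine $2$-surface, which is $3$-dimensional for $d$ by the parabolic transverse geometry. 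If instead many slopes are bounded away from $0$, then over a typical planar $\delta$-disk the lifted segments cross at heights spanning a $t$-interval of length $\gg\delta^2$, so that covering the portion of $E$ over that disk requires a factor $\gtrsim\delta^{-1}$ more Kor\'anyi balls than covering its shadow. In either alternative the covering number is pushed from $\delta^{-2}$ up to $\delta^{-3}$.

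\emph{Implementation.} Concretely I would run a covering count at scale $\delta$, bounding from below the number $M(\delta)$ of Kor\'anyi $\delta$-balls needed for $\bigcup_\theta S_\theta$. Writing the shadow as a union of planar $\delta$-disks $D$ and letting $n_D$ be the number of $\delta^2$-separated heights at which the lifted segments meet $\pi_0^{-1}(D)$, one has $M(\delta)\gtrsim\sum_D n_D$, while $\pi_0$ being $1$-Lipschitz together with the planar Kakeya theorem controls $\sum_D 1$ from below. Slicing $\bigcup_\theta S_\theta$ into horizontal slabs $\{c\le t<c+\delta\}$, inside each slab the segments contribute planar pieces in all directions, so a planar Kakeya/C\'ordoba $L^2$ estimate applied slab-by-slab should convert the vertical multiplicities $n_D$ together with the $2$-dimensionality of the shadow into a bound $\sum_D n_D\gtrsim\delta^{-3}$ up to logarithmic factors, whence $\dim_H\big(\bigcup_\theta S_\theta\big)\ge 3$.

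\emph{Main obstacle.} The decisive difficulty is that a planar Kakeya set may have Lebesgue measure zero, so the shadow is genuinely fractal and the directions may concentrate into bushes; the characteristic locus where $m_\theta\to0$ must also be handled, since there the vertical spreading degenerates and one must instead recover dimension from the area of the shadow. Quantifying the trade-off uniformly thus amounts to a second-moment estimate for the overlaps $\lvert N_\delta(S_\theta)\cap N_\delta(S_{\theta'})\rvert$ of the \emph{tilted} Kor\'anyi tubes, in which the angular separation (planar transversality) and the vertical separation produced by the slopes $m_\theta$ must be combined; this is the step that genuinely departs from, and must strengthen, the classical planar estimates. Once this estimate is in place, the remaining points---a measurable selection $\theta\mapsto q_\theta$, reduction to a compact positive-measure set of directions, and the passage from the covering bound to $\dim_H\ge 3$---are routine.
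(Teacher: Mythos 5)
Your proposal is a programme rather than a proof: the decisive estimate is exactly the step you defer. The bound $\sum_D n_D\gtrsim\delta^{-3}$ (equivalently, the second-moment estimate for overlaps of tilted Kor\'anyi tubes) is where all the difficulty lives, and nothing in the sketch supplies it. Moreover, both branches of your dichotomy have genuine problems. In the first branch, ``slopes mostly small $\Rightarrow$ the shadow carries positive area'' is false: the shadow is a planar Kakeya set and may well have Lebesgue measure zero regardless of the slopes (you concede this later, but it breaks the branch rather than merely complicating it). Even granting a shadow of full Hausdorff dimension $2$, the inference ``Euclidean dimension $2\Rightarrow$ Kor\'anyi dimension $3$'' is not valid for general sets: the dimension comparison for subsets of $\hh$ only forces the Heisenberg dimension of a Euclidean-$2$-dimensional set to lie in $[2,3]$, and the value $3$ is attained by smooth non-characteristic surfaces or sets of positive area, not by arbitrary measure-zero unions of horizontal segments. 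In the second branch, the claim that large slopes force the lifted segments over a planar $\delta$-disk to spread over a $t$-interval of length $\gg\delta^2$ with $\gtrsim\delta^{-1}$ separated heights is an unquantified heuristic: the heights could concentrate, and controlling this is again the missing overlap estimate.

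For contrast, the paper takes an entirely different route that avoids $\delta$-discretized incidence counting. It encodes each unit horizontal segment in $E$ by a quadruple $(a,b,d,\ez)$, shows (after a Borel selection) that the parameter set contains a Borel set $B\subset\rr^3$ with $\dim_{\ch}^{\rr}B\ge1$ whose segments all cross every vertical plane $\{x=c\}$ for $c$ in an interval, and observes a duality: the vertical projection of the slice $E\cap\{x=c\}$ to the $t$-axis is, up to a scalar, the restricted orthogonal projection $\rho_{(-c,-c^2/2,1)}(B)$ onto a one-parameter family of lines lying on a cone. The restricted projection theorem of K\"aenm\"aki--Orponen--Venieri then gives that almost every such projection has dimension $1$, hence (using that the $t$-axis doubles dimension in the Kor\'anyi metric) almost every slice $E\cap\{x=c\}$ has Heisenberg dimension $\ge2$ and infinite $\ch_\hh^\az$-measure for $\az<2$; the Eilenberg--Harrold coarea inequality for the $1$-Lipschitz map $(x,y,t)\mapsto x$ then integrates these slices to give $\dim_{\ch}^{\hh}(E)\ge3$. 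If you want to salvage your approach, the honest assessment is that your ``main obstacle'' paragraph identifies a genuinely open-ended task, whereas the slicing-plus-restricted-projection argument closes the proof with existing theorems.
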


In the following, $E$ will denote a Kakeya set in the first Heisenberg group.

Our method to show Theorem \ref{thm1} is based on the idea of \cite{b64,f85}. We first encode each horizontal line segment in $E$ by a quadruple in $\rr^4$ forming a subset $L(E) \subset \rr^4$. Then we transfer the computation for dimensions of each intersection of $E$ and a plane belonging to a one parameter family to that of a subset in $\rr^3$ obtained by certain projections acting on $L(E)$. This can be seen as a duality principle. Finally we use a recent Marstrand-type projection theorem in $\rr^3$ by K\"{a}enm\"{a}ki-Orponen-Venieri \cite{kov17} and a co-area inequality by Eilenberg-Harrold, Jr. \cite{eh43} to conclude the proof.

\bigskip
Since every Kakeya set in the first Heisenberg group has Heisenberg Hausdorff dimension at least 3, a further question that may be asked is to find a lower bound of 3-dimensional Heisenberg Hausdorff measure among all Kakeya sets in the first Heisenberg group.
Unlike the Euclidean Kakeya set, which may have $n$-dimensional Lebesque measure zero in every $\rr^n$ (for example, see \cite{b19}), it is not easy to show a counterpart for Kakeya sets in Heisenberg group. Hence we would like to ask the following question

\begin{prob}
  Does there exist a Kakeya set in the first Heisenberg group with zero 3-dimensional Heisenberg Hausdorff measure?
\end{prob}

The paper is organised as follows. In section 2, we recall some background in Heisenberg groups, the  Marstrand-type projection theorem in $\rr^3$ and the co-area inequality. In section 3, we prove Theorem \ref{thm1}.

\begin{center}

\textsc{Acknowledgement
}
\end{center}
J. L. would like to thank K. F\"{a}ssler and T. Orponen for many meaningful discussions.

\section{Preliminaries}

The first part of this section is dedicated to a brief introduction to the first Heisenberg group $\mathbb{H}$. For a detailed one, we refer the readers to \cite{cpdt}.

The first Heisenberg group $\mathbb{H}$ is $\mathbb{R}^{3}$,
equipped with the group multiplication, for any $w =  (x ,y , t)$ and $w'=(x',y',t')$, as follows
\begin{equation}\label{mu}
  w  w' = \bigg ( x + x' ,y+y', t+t'+\frac{1}{2}[xy'-x'y] \bigg ).
\end{equation}

We introduce the Kor\'{a}nyi metric on the first Heisenberg group. This is the left invariant
metric given by
\begin{equation}\label{ko}
   d_\hh(w,w'):= \| (w')^{-1}\cdot w\|_\hh
\end{equation}
where $\|\cdot\|_\hh$ is defined as
$$ \|(x,y,t)\|_\hh=   ((x^2+y^2)^2+ 16t^2)^{1/4}. $$

We define horizontal lines in the first Heisenberg group $\hh$ as lines which can be obtained by a left translation of some line passing through the origin and lying in the $\{xoy\}$-plane.

By the definition of horizontal lines, we know that for any $b \in \rr$ and $q \in \hh$, $qI_b$ and $qJ_b$ are horizontal line and horizontal line segment respectively where
  \begin{equation}  \label{pa3}
       I_b(\tau) = (\tau,b\tau,0),   \  \tau \in \rr
  \end{equation}
and
\begin{equation*}
       J_b(\tau) = (\tau,b\tau,0),   \  \tau \in (-\frac{1}{2\sqrt{b^2+1}},\frac{1}{2\sqrt{b^2+1}}).
  \end{equation*}

The following observation is needed in the proof of Theorem \ref{thm1}.

\begin{lem}\label{pa1}
  For any $b \in \rr$ and $q=(q_1,q_2,q_3) \in \hh$,
  \begin{enumerate}
    \item [(1)] $qI_b$ and $qJ_b$ can be parameterised as
  \begin{equation}\label{pa2}
    qI_b(s)= (s,bs+a,-\frac{as}2+d), \ s\in \rr
  \end{equation}
  and
  \begin{equation}\label{pa21}
    qJ_b(s)=  (s,bs+a,-\frac{as}2+d), \ s\in (\ez,\ez+\frac{1}{\sqrt{b^2+1}})
  \end{equation}
  where $a=q_2-bq_1$, $d=q_3+\frac12 a q_1$ and $\ez= q_1-\frac{1}{2\sqrt{b^2+1}}$.
    \item [(2)] If we denote
    \begin{equation}\label{pa31}
      l_{(a,b,d)}:= \{(s,bs+a,-\frac{as}2+d) \in \hh \ | \ s \in \rr \}
    \end{equation}
    and
    $$l^\ez_{(a,b,d)}:= \lf \{(s,bs+a,-\frac{as}2+d) \in \hh \ | \ s \in (\ez,\ez+\frac{1}{\sqrt{b^2+1}}) \lr \}.$$
    Then
    $l_{(a,b,d)}^\ez$ has length $1$ with respect to $d_\hh$ for every $a,b,d,\ez$.
    \item [(3)]
    If $b \in (-\sqrt{3},\sqrt{3})$, then the orthogonal projection of $l_{(a,b,d)}^\ez$ to the $x$-axis has Euclidean length greater than $\frac12$.
  \end{enumerate}
\end{lem}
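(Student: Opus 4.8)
The plan is to read the orthogonal projection directly off the parameterisation supplied in part (1). A generic point of $l_{(a,b,d)}^\ez$ has the form $(s,\,bs+a,\,-\frac{as}{2}+d)$ with $s \in (\ez,\ez+\frac{1}{\sqrt{b^2+1}})$, so its first coordinate is \emph{exactly} the parameter $s$. Consequently the orthogonal projection to the $x$-axis sends such a point to the value $s$, and as $s$ ranges over the open interval $(\ez,\ez+\frac{1}{\sqrt{b^2+1}})$ the projected set is precisely that interval.

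First I would note that this projection is injective on the segment, since distinct values of $s$ produce distinct first coordinates. Hence the image under projection is an interval whose Euclidean length equals the length of the parameter interval, namely $\frac{1}{\sqrt{b^2+1}}$. Finally I would invoke the hypothesis $b \in (-\sqrt{3},\sqrt{3})$, which gives $b^2 < 3$, whence $b^2+1 < 4$ and $\sqrt{b^2+1} < 2$, so that $\frac{1}{\sqrt{b^2+1}} > \frac{1}{2}$. This establishes the claimed lower bound on the Euclidean length of the projection.

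The argument is entirely elementary, so there is essentially no obstacle to overcome. The only point genuinely worth emphasising is that the parameterisation produced in part (1) uses the $x$-coordinate itself as the parameter $s$; this is what makes the behaviour of the projection to the $x$-axis completely transparent and reduces part (3) to the one-line inequality $\sqrt{b^2+1} < 2$ for $|b| < \sqrt{3}$.
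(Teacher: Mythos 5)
Your argument for part (3) is correct and is essentially the paper's own: the parameterisation uses the $x$-coordinate as the parameter, so the projection of $l^{\ez}_{(a,b,d)}$ to the $x$-axis is exactly the interval $(\ez,\ez+\frac{1}{\sqrt{b^2+1}})$, whose length exceeds $\tfrac12$ precisely because $b^2<3$ forces $\sqrt{b^2+1}<2$.

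However, the statement you were asked to prove has three parts, and your proposal addresses only the third; parts (1) and (2) are taken as ``supplied'' rather than proved, which is a genuine gap. Part (1) is not merely notation: it is the computation, via the group law, that $q\cdot(\tau,b\tau,0)=(q_1+\tau,\,q_2+b\tau,\,q_3+\tfrac12(q_1b-q_2)\tau)$, followed by the substitution $s=q_1+\tau$ and the verification that the third coordinate equals $-\tfrac{as}{2}+d$ with $a=q_2-bq_1$ and $d=q_3+\tfrac12 aq_1$ (and that the parameter interval for $J_b$ shifts to $(\ez,\ez+\frac{1}{\sqrt{b^2+1}})$ with $\ez=q_1-\frac{1}{2\sqrt{b^2+1}}$). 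This identity is what the rest of the paper's duality argument rests on, so it must be checked. Part (2) likewise needs an argument: one observes that left translations are isometries of $(\hh,d_\hh)$, so it suffices to compute the length of $J_b$ itself; for two points $(\tau,b\tau,0)$ and $(\tau',b\tau',0)$ on a line through the origin in the $\{xoy\}$-plane the Kor\'anyi distance reduces to the Euclidean one, $|\tau-\tau'|\sqrt{1+b^2}$, so the parameter interval of length $\frac{1}{\sqrt{b^2+1}}$ gives a segment of $d_\hh$-length $1$. Your reduction of (3) to the inequality $\sqrt{b^2+1}<2$ is fine, but as submitted the proposal proves one third of the lemma.
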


\begin{proof}
\begin{enumerate}
  \item [(1)] Let $I_b$ be parameterised as in \eqref{pa3}. Then by the Heisenberg multiplication law \eqref{mu}, we have
  $$ qI_b = \{(q_1+\tau, q_2 +b\tau, q_3+ \frac12(q_1b\tau-q_2 \tau)) \ | \ \tau \in \rr \ \} . $$
  Letting $s=q_1+\tau$, $a=q_2-bq_1$ and $d=q_3+\frac12 a q_1$, we arrive at \eqref{pa2}. In addition, letting $\ez= q_1-\frac{1}{2\sqrt{b^2+1}}$, we verify that \eqref{pa21} holds.
  \item [(2)] Using the definition of $d_\hh$ and the fact that left translation is an isometry with respect to $d_\hh$, we deduce the result.
  \item [(3)] From \eqref{pa21}, the orthogonal projection of $l_{(a,b,d)}^\ez$ to the $x$-axis is the interval $(\ez,\ez+\frac{1}{\sqrt{b^2+1}}) \subset x$-axis. Hence when $b \in (-\sqrt{3},\sqrt{3})$, the length of the interval is greater than $\frac12$.
\end{enumerate}
\vspace{-0.5cm}
\end{proof}

\begin{rem}
  \rm In the sense of sub-Riemannian geometry, there exists more general horizontal curves in $\hh$ besides horizontal lines. Indeed, associated to the group operation \eqref{mu}, we can define the left invariant vector fields
    \begin{align*}
        X &= \frac{\partial}{\partial x} - \frac{y}{2}\frac{\partial}{\partial t},  \quad Y = \frac{\partial}{\partial y} + \frac{x}{2}\frac{\partial}{\partial t}.
    \end{align*}
A Lipschitz curve $\gz = (\gz_1, \gz_{2}, \gz_{3}) : [a, b] \to \hh$ is said to be horizontal if $\dot{\gz}(s) \in \text{Span}\{X(\gz(s)),Y(\gz(s))\}$, i.e. $ \dot{\gz}(s) = a(s)X(\gz(s))+b(s)Y(\gz(s))$, for almost every $s \in [a, b]$.
One can check that horizontal lines are indeed horizontal curves under the this definition. For more information from the sub-Riemannian point of view, we refer readers to \cite[Chapter 2]{cpdt}.
\end{rem}

In this paper,
we denote by $\ch_{\hh}^s$ (resp. $\ch_{\rr}^{s}$) the $s$-dimensional Hausdorff measure induced by the Kor\'{a}nyi metric (resp. Euclidean metric) and by $ \dim_{\ch}^{\hh}$ (resp. $ \dim_{\ch}^{\rr}$) the Hausdorff dimension of sets induced by Kor\'{a}nyi metric (resp. Euclidean metric).
In addition, given any set $A \subset \hh$, we denote
\begin{equation}\label{pa6}
   L(A):= \{ (a,b,d,\ez)\in\rr\times (-\sqrt{3},\sqrt3)\times\rr \times \rr \ | \  l^\ez_{(a,b,d)} \subset A \}
\end{equation}
and
\begin{equation}\label{pa7}
   L(A,c):= \{ (a,b,d,\ez)\in L(A) \ | \  l^\ez_{(a,b,d)} \cap \{x=c\} \ne \emptyset \}.
\end{equation}

\bigskip
Next, we recall a version of a Marstrand-type projection theorem \cite[Theorem 1.2]{kov17}:
\begin{thm} \label{thm2}
  Suppose that $\gz : [0, 2\pi) \to \rr^3, \tz \mapsto \gz(\tz)=\frac1{\sqrt{2}}(\cos \tz, \sin \tz ,1)$. If $K \subset \rr^3$ is Borel
set, then $\dim_{\ch}^{\rr} \rho_{\gz(\tz)}(K) = \min \{ \dim_{\ch}^{\rr}K, 1\}$ for almost every $\tz \in [0, 2\pi)$.
\end{thm}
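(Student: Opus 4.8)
I read $\rho_{\gz(\tz)}$ as the orthogonal projection of $\rr^3$ onto the line $\rr\,\gz(\tz)$; since $|\gz(\tz)|=1$ this is the linear functional $\rho_{\gz(\tz)}(x)=\la x,\gz(\tz)\ra$, so its image sits in $\rr$. The plan is to prove the two inequalities separately. The upper bound $\dim_{\ch}^{\rr}\rho_{\gz(\tz)}(K)\le\min\{\dim_{\ch}^{\rr}K,1\}$ is immediate and holds for \emph{every} $\tz$: each $\rho_{\gz(\tz)}$ is $1$-Lipschitz, hence cannot raise Hausdorff dimension, and its image lies in a line. All the content is therefore in the lower bound, which I would establish by the potential-theoretic (energy) method, handling $s:=\dim_{\ch}^{\rr}K\le1$ and $s>1$ at once: fix a target exponent $t<\min\{s,1\}$ and prove $\dim_{\ch}^{\rr}\rho_{\gz(\tz)}(K)\ge t$ for almost every $\tz$, then let $t\uparrow\min\{s,1\}$ along a countable sequence.

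First I would invoke Frostman's lemma to produce a compactly supported probability measure $\mu$ on $K$ with finite $t$-energy $I_t(\mu)=\iint|x-y|^{-t}\,d\mu(x)\,d\mu(y)<\fz$ for every $t<s$. Pushing $\mu$ forward under each projection and integrating the resulting energies over the parameter, Fubini gives
\begin{equation*}
\dint_0^{2\pi} I_t\bl (\rho_{\gz(\tz)})_{\#}\mu\br\,d\tz
=\iint\bl\dint_0^{2\pi}\frac{d\tz}{|\la x-y,\gz(\tz)\ra|^{t}}\br\,d\mu(x)\,d\mu(y).
\end{equation*}
If the inner integral is $\ls|x-y|^{-t}$ uniformly, the right-hand side is $\ls I_t(\mu)<\fz$, so $I_t((\rho_{\gz(\tz)})_{\#}\mu)<\fz$ for almost every $\tz$, and the standard energy--dimension criterion yields $\dim_{\ch}^{\rr}\rho_{\gz(\tz)}(K)\ge t$ for those $\tz$. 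Thus the entire problem reduces to the single-variable estimate $\dint_0^{2\pi}|\la u,\gz(\tz)\ra|^{-t}\,d\tz\ls1$ for unit vectors $u$, which by the layer-cake formula follows from a sublevel-set bound $|\{\tz:|\la u,\gz(\tz)\ra|\le\dz\}|\ls\dz^{\bz}$ and converges precisely when $t<\bz$.

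The sublevel-set bound is where the geometry of $\gz$ enters. Writing $f(\tz)=\la u,\gz(\tz)\ra=\tfrac1{\sqrt2}(u_1\cos\tz+u_2\sin\tz+u_3)$, one computes that $\gz,\gz',\gz''$ are linearly independent for every $\tz$ (their determinant equals $\tfrac1{2\sqrt2}$), so the Gram matrix is uniformly positive definite and $|f|^2+|f'|^2+|f''|^2\gs1$ on the unit sphere; hence $f$ never vanishes to order higher than $2$, and a Taylor/van der Corput argument gives $\bz=\tfrac12$. This already proves the theorem in the range $s\le\tfrac12$. The hard part will be the complementary range $\tfrac12<s\le1$ (and the passage $t\uparrow1$ when $s>1$): the exponent $\bz=\tfrac12$ is genuinely attained, at the tangential directions $u_3=\pm\tfrac1{\sqrt2}$ where $f$ has a nondegenerate critical zero, so the iterated energy integral diverges for $t\ge\tfrac12$ and the second-moment argument above breaks down. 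The single-exponent energy estimate is simply too lossy here, since it weights all pairs $(x,y)$ uniformly while the bad directions form only a one-dimensional subset of the sphere. Capturing this would, I expect, require a genuinely multiscale harmonic-analytic input in place of the divergent integral — for instance an $L^p$ bound for the projections coming from a Wolff-type circular maximal inequality or a decoupling estimate tailored to the non-degenerate curve $\gz$ — and then converting that estimate back into the dimension lower bound. Supplying this refined estimate is where essentially all the difficulty of the theorem resides.
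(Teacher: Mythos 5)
The paper does not actually prove this statement: it is imported verbatim as Theorem 1.2 of K\"{a}enm\"{a}ki--Orponen--Venieri \cite{kov17}, so there is no internal proof to compare yours against. Judged on its own terms, your attempt is a correct \emph{reduction} but not a proof. The upper bound is fine, and the energy-method setup --- Frostman's lemma, push-forward of $\mu$, Fubini, and the reduction to the sublevel-set bound $|\{\tz : |\la u,\gz(\tz)\ra|\le \dz\}|\ls \dz^{1/2}$ --- is standard and correctly executed; your computation that $\det(\gz,\gz',\gz'')\equiv \tfrac{1}{2\sqrt 2}$, hence that $f=\la u,\gz(\cdot)\ra$ vanishes to order at most two, is also right. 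But, as you yourself note, this only yields the conclusion for $\dim_{\ch}^{\rr}K\le \tfrac12$, and the exponent $\tfrac12$ is genuinely sharp at the directions $u$ for which $f$ has a nondegenerate critical zero, so the iterated energy integral diverges for every $t\ge\tfrac12$ and no refinement of the same second-moment argument can close the range. The entire content of the theorem --- and the reason it merited a separate paper --- is precisely the regime $\tfrac12<\dim_{\ch}^{\rr}K\le 1$, which your proposal leaves open.

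Concretely, the missing ingredient is the multiscale incidence-geometric input that \cite{kov17} supplies: they replace the single-exponent energy estimate by a $\dz$-discretised counting argument for near-tangencies, in the spirit of Wolff's and Schlag's work on the circular maximal function, together with a bootstrap that upgrades the almost-everywhere lower bound from $\tfrac12$ toward $1$. You correctly diagnose that ``a Wolff-type circular maximal inequality or a decoupling estimate'' is needed, but diagnosing the obstruction is not the same as overcoming it; since that machinery is where essentially all of the difficulty resides and none of it is supplied, the proposal has a genuine gap and establishes the theorem only for Borel sets of dimension at most $\tfrac12$.
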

Here, and in what follows, for any $x \in \rr^3\setminus \{0\}$, $\rho_{x}: \rr^3 \to \text{Span} (x)$  denotes the Euclidean orthogonal projection to the straight line passing through the origin and $x$.

 We also need the following co-area inequality \cite[Theorem 1]{eh43}:
 \begin{thm} \label{thm3}
   Let $X$ be an arbitrary metric space, $0 \le \az  < \fz$ be real numbers
and $F \subset X$ be any subset. Then, for any $1$-Lipschitz map $f : X \to \rr$ we have
\begin{equation}\label{co}
  \int_\rr^\ast \ch^{\az}_X(F \cap f^{-1}(y)) \, d y \le \ch^{\az+1}_X(F) .
\end{equation}
Here, $\int^\ast_\rr g \, dy$ is the upper integral of $g: \rr \to [0,+\fz)$. That is
$$ \int^\ast_\rr g(y) \, dy = \inf \int_\rr h(y) \, dy $$
where the infimum is taken over all measurable functions $h: \rr \to [0,+\fz) $ satisfying $0 \le g(y) \le  h(y)$
for a.e. $y \in \rr$.
 \end{thm}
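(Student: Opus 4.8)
The plan is to reproduce the classical slicing (Eilenberg) argument: establish the inequality first at a fixed scale $\dz>0$ for the Hausdorff premeasures and then let $\dz\to0$. I would work with the unnormalised premeasures
\[
\ch^{s}_{X,\dz}(A)=\inf\Big\{\textstyle\sum_i (\diam E_i)^s \ : \ A\subset\bigcup_i E_i,\ \diam E_i\le\dz\Big\},
\]
for which $\ch^s_X(A)=\sup_{\dz>0}\ch^s_{X,\dz}(A)$; it is under this (standard) normalisation that the constant in \eqref{co} is exactly $1$. The single geometric fact I will exploit is that a $1$-Lipschitz map $f$ satisfies $\diam f(E)\le\diam E$ for every $E\subset X$; this is what turns one factor $\diam E_i$ of the $(\az+1)$-dimensional sum into the length over which $y$ ranges, accounting for the extra dimension after integrating in $y$.

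At a fixed scale I would fix $\ez>0$ and choose a cover $\{E_i\}_i$ of $F$ with $\diam E_i\le\dz$ and $\sum_i(\diam E_i)^{\az+1}\le \ch^{\az+1}_{X,\dz}(F)+\ez$. Letting $J_i\subset\rr$ be the smallest closed interval containing $f(E_i)$, so that $|J_i|=\diam f(E_i)\le\diam E_i$, the key observation is that for each fixed $y$ the subfamily $\{E_i : y\in f(E_i)\}$ covers $F\cap f^{-1}(y)$ by sets of diameter $\le\dz$ (if $x\in F\cap f^{-1}(y)$ then $x\in E_i$ for some $i$ and $y=f(x)\in f(E_i)$); hence
\[
\ch^{\az}_{X,\dz}\big(F\cap f^{-1}(y)\big)\le\sum_i(\diam E_i)^{\az}\,\mathbf 1_{J_i}(y)=:h(y).
\]
The function $h$ is a countable sum of nonnegative multiples of indicators of intervals, hence measurable, and $\int_\rr h=\sum_i(\diam E_i)^{\az}|J_i|\le\sum_i(\diam E_i)^{\az+1}\le\ch^{\az+1}_X(F)+\ez$. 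As $h$ is a measurable majorant of the slice function, the definition of the upper integral followed by $\ez\to0$ gives
\[
\int_\rr^\ast \ch^{\az}_{X,\dz}\big(F\cap f^{-1}(y)\big)\,dy\le \ch^{\az+1}_X(F)\qquad\text{for every }\dz>0.
\]

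The remaining step, letting $\dz\to0$ inside the upper integral, is where I expect the \emph{main obstacle} to lie. Because $F$ is an arbitrary set, the slice function $g(y):=\ch^{\az}_X(F\cap f^{-1}(y))$ need not be Lebesgue measurable, which is precisely why \eqref{co} is phrased with an upper integral and why one cannot directly invoke the monotone convergence theorem. My plan is to fix a sequence $\dz_n\downarrow0$, set $g_n(y):=\ch^{\az}_{X,\dz_n}(F\cap f^{-1}(y))\uparrow g(y)$, and pass to measurable envelopes $g_n^\ast$, which exist since Lebesgue measure on $\rr$ is $\sigma$-finite: each $g_n^\ast$ is measurable with $g_n^\ast\ge g_n$, $\int g_n^\ast=\int_\rr^\ast g_n$, and minimal among measurable majorants of $g_n$. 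Minimality forces monotonicity, $g_n^\ast\le g_{n+1}^\ast$ a.e., since $g_{n+1}^\ast$ already majorises $g_n$; thus $g_n^\ast\uparrow\psi$ for some measurable $\psi\ge\sup_n g_n=g$. Ordinary monotone convergence then yields
\[
\int_\rr^\ast g\le\int_\rr\psi=\lim_{n\to\fz}\int_\rr g_n^\ast=\lim_{n\to\fz}\int_\rr^\ast g_n\le \ch^{\az+1}_X(F),
\]
which is exactly \eqref{co}. In summary, the geometry is elementary once the bound $\diam f(E)\le\diam E$ is in hand; the only real work is the measure-theoretic justification of exchanging the scale limit with the upper integral, handled by the measurable-envelope (equivalently, monotone-convergence-for-upper-integrals) argument above.
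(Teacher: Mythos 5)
The paper does not prove this statement at all; it is quoted directly from Eilenberg--Harrold \cite{eh43} (Theorem 1), so there is no internal proof to compare against. Your argument is the standard slicing proof of that result and is correct: the fixed-scale estimate via $\diam f(E_i)\le \diam E_i$ is exactly the classical computation with the sharp constant $1$ for the unnormalised premeasures, and you correctly identified and handled the one genuinely delicate point --- passing to the limit $\dz\to 0$ inside the upper integral for a possibly nonmeasurable slice function --- by the measurable-envelope monotone-convergence argument.
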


\section{Proof of Theorem 1.2}

\begin{proof}[Proof of Theorem \ref{thm1}]
Since every set is contained in a $G_\dz$-set of the same dimension, we may assume $E$ to be $G_\dz$.
\bigskip

\emph{Step 1. Properties of $L(E,c)$} \\
First, we need

\bigskip
\noindent
\textsc{Claim I:} \emph{For every $c \in \rr$, $L(E,c)$ is a $G_\dz$ set in $\rr^4$.}

\begin{proof}[Proof of \textsc{Claim I}]
Recalling \eqref{pa6},
$$  L(E,c) = \{ (a,b,d,\ez)\in L(E) \ | \ l^\ez_{(a,b,d)} \cap \{x=c\}\ne \emptyset \}.$$
Since $E$ is a $G_\dz$ set, we can find a sequence of open sets $\{E_i\}_{i \in \nn}$ such that
$ E_i \supset E_{i+1}$ for each $i \in \nn$ and
$$  E = \bigcap_{i \in \nn} E_i.$$

 Consider the sets
 $$ L(E_i,c) = \{  (a,b,d,\ez)\in L(E_i) \ | \ (l^\ez_{(a,b,d)} \cap \{x = c\}) \ne \emptyset  \}.$$
 We assert $L(E_i,c)$ is open for any $c \in \rr$ and $i \in \nn$.
 Consider an arbitrary quadruple $(a,b,d,\ez) \in L(E_i,c)$, i.e.
 \begin{equation}\label{nemp}
   l^\ez_{(a,b,d)} \cap \{x=c\} \ne \emptyset \quad \text{and} \quad  l^\ez_{(a,b,d)}   \subset  E_i,
 \end{equation}
 Thanks to the openness of $E_i$ and the interval $(\ez,\ez+\frac{1}{\sqrt{b^2+1}})$, we deduce that for $(a',b',d',\ez')$ close enough to $(a,b,d,\ez)$,
 we have
 $$  l^{\ez'}_{(a',b',d')} \cap \{x=c\} \ne \emptyset \quad \text{and} \quad l^{\ez'}_{(a',b',d')}   \subset  E_i$$
 and hence
 $ (a',b',d',\ez') \in L(E_i,c)$,
 which implies $L(E_i,c)$ is open.

 We are left to show
  \begin{equation}\label{open}
    L(E,c)= \bigcap_{i \ge 1} L(E_i,c).
  \end{equation}
Note that the direction ``$\subset$'' of \eqref{open} is obvious. We show the direction ``$\supset$''. If there exists $(a,b,d,\ez) \in \bigcap_{i \ge 1} L(E_i,c)$, then we infer that
\begin{equation}\label{nemp1}
 l^\ez_{(a,b,d)} \cap \{x=c\} \ne \emptyset \quad \text{and} \quad  l^\ez_{(a,b,d)}   \subset  E_i, \text{ for all } i.
\end{equation}
On the other hand, $E = \cap_{i \in \nn} E_i$ implies
\begin{equation}\label{nemp2}
  \{x=c\} \cap \lf(\bigcap_{i \in \nn} E_i \lr ) = \{x=c\} \cap E.
\end{equation}
Combining \eqref{nemp1} and \eqref{nemp2} we conclude
$$ l^\ez_{(a,b,d)}   \subset E ,$$
which verifies \eqref{open}.
\end{proof}

Furthermore,
we have the following

\bigskip
\noindent
\textsc{Claim II:} \emph{There exists at least one $c_0 \in \rr$ satisfying
\begin{equation}\label{cont}
   \ch^1_\rr(\pi_{123}(L(E,c_0))) >0
\end{equation}
where $\pi_{123}$ is the orthogonal projection from $\rr^4$ to the subspace spanned by the first three coordinates.}

\begin{proof}[Proof of \textsc{Claim II}]
    Since $E$ is a Kakeya set in $\hh$,
by Definition \ref{d1} and recalling \eqref{pa6}, we have
   $$ \pi_2 (L(E)) \supset (-\sqrt {3},\sqrt {3}) $$
   where $\pi_2$ is the orthogonal projection from $\rr^4$ to the subspace spanned by the second coordinate,
   which implies
   \begin{equation}\label{pr6}
     \ch^1_\rr(\pi_{2}(L(E))) >0.
   \end{equation}
   By observing that
   $$ L(E)= \bigcup_{c \in \mathbb{Q}} L(E,c) $$
   and using \eqref{pr6},
   we infer that there exists $c_0$ such that
   \begin{equation}\label{pr7}
     \ch^1_\rr(\pi_{2}(L(E,c_0)))>0.
   \end{equation}
   Noting that
   $$ \ch^1_\rr(\pi_{123}(L(E,c))) \ge \ch^1_\rr(\pi_{2}(L(E,c))) , \ \forall c \in \rr,$$
   we conclude the proof.
\end{proof}

We end step 1 with the following

\bigskip
\noindent
\textsc{Claim III:}
\emph{
We can find a Borel set $B \subset \pi_{123}(L(E,c_0))$ with
\begin{equation}\label{wz0}
  \ch_{\rr}^{1}(B) >0
\end{equation} and
at least one of the following holds:
\begin{equation}\label{wz01}
  B \subset \pi_{123}(L(E,c)),  \quad  \forall c \in [c_0-\frac14,c_0]
\end{equation}
or
\begin{equation}\label{wz1}
  B \subset \pi_{123}(L(E,c)),  \quad  \forall c \in [c_0,c_0+\frac14].
\end{equation}
  }

\begin{proof}[Proof of \textsc{Claim III}] Using Lemma \ref{pa1}(3), we observe that if $l^\ez_{(a,b,d)} \cap \{x=c_0\} \ne \emptyset$, then either
$$   l^\ez_{(a,b,d)} \cap \{x=c_0-\frac14\} \ne \emptyset, $$
or
$$   l^\ez_{(a,b,d)} \cap \{x=c_0+\frac14\} \ne \emptyset. $$

\vspace*{7pt}
\begin{figure}[h]
\centering
\includegraphics[width=12cm]{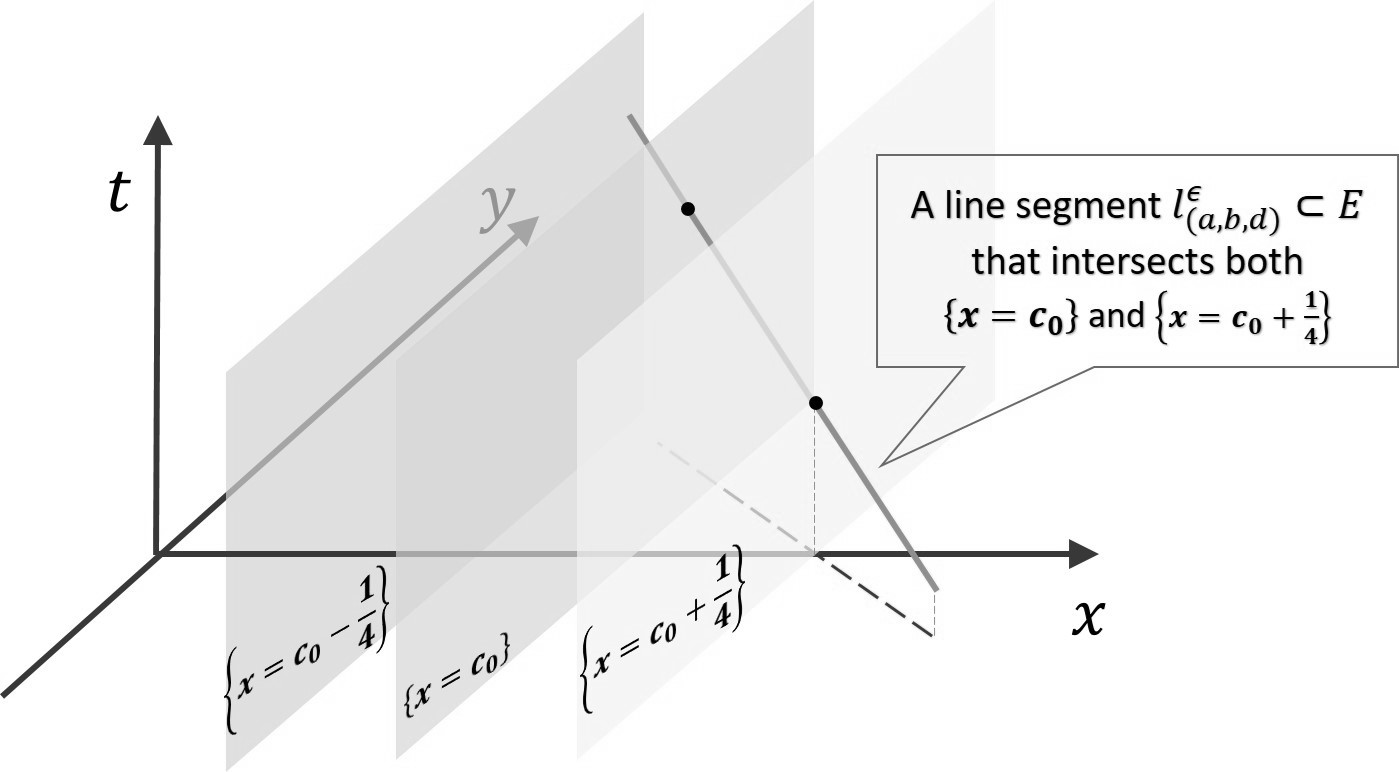}
\caption  {A line segment that intersects both $\{x=c_0\}$ and $\{ x= c_0 + \frac14\}$.}
\label{f1}
\end{figure}

\noindent
Hence we have
$$ L (E,c_0) \subset [L (E,c_0) \cap L (E,c_0-\frac14)] \cup [L (E,c_0) \cap L (E,c_0+\frac14)].$$
We conclude
\begin{equation}\label{alter}
  \pi_{123} (L (E,c_0)) \subset \pi_{123} (L (E,c_0) \cap L (E,c_0-\frac14)) \cup \pi_{123} (L (E,c_0) \cap L (E,c_0+\frac14)).
\end{equation}
From the above inclusion, we can assume, without loss of generality, that
$$ \ch^1_\rr \lf ( \pi_{123} (L (E,c_0) \cap L (E,c_0+\frac14)) \lr) \ge \frac12 \ch^1_\rr(L (E,c_0)) >0 $$
where the last inequality results from \textsc{Claim II}.

On the other hand, if $(a,b,d,\ez) \in L (E,c_0) \cap L (E,c_0+\frac14)$, then for any $c \in [c_0,c_0+\frac14]$, we have $(a,b,d,\ez) \in L (E,c)$, which indicates
$$  L (E,c_0) \cap L (E,c_0+\frac14) \subset L (E,c) \text{  for any } c \in [c_0,c_0+\frac14]. $$
By \textsc{Claim I}, for any $c \in \rr$, we know that $L (E,c)$ is a $G_\dz$ set and hence $\pi_{123} (L (E,c_0) \cap L (E,c_0+\frac14))$ is an analytic set.
Hence we can apply Corollary 2 in \cite{d52} to choose $B$ to be a closed subset of $\pi_{123} (L (E,c_0) \cap L (E,c_0+\frac14))$ with $\ch^1_\rr(B)>0$. Therefore $B$ satisfies the assumption of the claim.
\end{proof}

\textsc{Claim III} enables us to choose $c_0 \in\rr$ such that, without loss of generality, there exists a Borel set $B \subset \pi_{123}(L(E,c_0))$ satisfying \eqref{wz0}, i.e.
$$\ch_{\rr}^{1}(B) >0.$$
and \eqref{wz1}.
Therefore,
we infer that
\begin{equation}\label{dd2}
  \dim_{\ch}^{\rr}(B) \ge 1.
\end{equation}

\bigskip
\emph{Step 2. Establish a duality principle.}  \\
Recall the definition of $l_{(a,b,d)}$ in \eqref{pa31}. For every $c \in [c_0,c_0+1/4]$, we consider $E_c \subset \{x=c\} \cap E \subset \hh$ defined by
\begin{align}\label{lc}
  E_c :&= \{l_{a,b,d} \cap \{x=c\} \ | \ (a,b,d) \in B\} \notag \\
   & =\{(c,bc+a,-\frac{ac}2+d) \ | \ (a,b,d) \in B\}.
\end{align}

Use left translation $T_{(-c,0,0)}$ to translate $E_c$ to $\{yot\}$-plane,
which means $E_c^1:=T_{(-c,0,0)}(E_c)$ lies in $\{yot\}$-plane and has same dimension as $E_c$.
See the above Figure \ref{f2}.


\vspace*{7pt}
\begin{figure}[h]
\centering
\includegraphics[width=10cm]{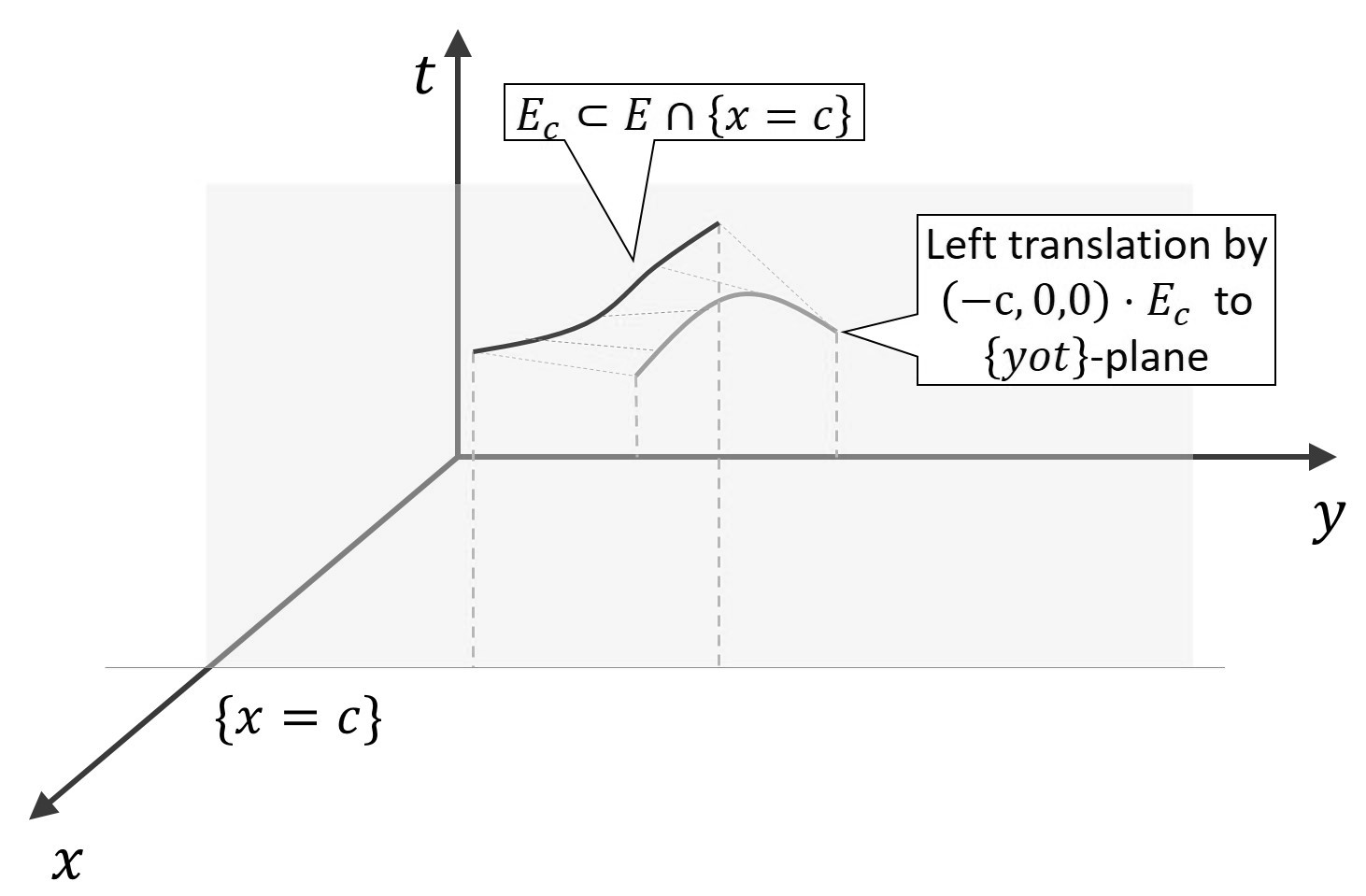}
\caption  {Translating $E_c$ to $\{yot\}$-plane}
\label{f2}
\end{figure}

Recalling \eqref{lc} and the Heisenberg multiplication law \eqref{mu}, we deduce that
\begin{align}\label{tr}
  E_c^1 &= T_{(-c,0,0)}(E_c) \notag \\
   & = \{(-c,0,0)\cdot(c,bc+a,-\frac{ac}2+d) \ | \ (a,b,d) \in B \} \notag \\
   &= \{  (0,bc+a,-ac-\frac{bc^2}2+d) \ | \ (a,b,d) \in B     \} .
\end{align}
Notice that the third coordinate of points in $E_c^1$ expressed in \eqref{tr} takes the form
\begin{equation*}
  -ac-\frac{bc^2}2+d = \lf \la (-c,-\frac{c^2}2,1),(a,b,d) \lr \ra
\end{equation*}
where $\la,\ra$ is the Euclidean inner product in $\rr^3$.

By considering the $t$-axis as $\rr$ and letting
\begin{equation}\label{phi}
  \varphi: \{yot\} \to \hh, \ (0,y,t) \mapsto (0,0,t),
\end{equation}
we can write
\begin{align}\label{phi2}
  \varphi(E_c^1) &= \lf\{\lf \la (-c,-\frac{c^2}2,1),(a,b,d) \lr \ra \ | \ (a,b,d) \in B \lr\} \notag\\
  & = \lf( 1+ \frac{c^2}{2} \lr)\rho_{(-c, -\frac{c^2}{2},1)}(B).
\end{align}

Equation \eqref{phi2} implies that $\varphi(E_c^1)$ can be viewed as a Euclidean projection of $B$ to the one parameter family of lines $\Gamma=\{\gz_c: \rr \to \rr^3 \ | \ t \mapsto(-ct, -\frac{c^2}{2}t,t), c \in [c_0,c_0+1/4]\}$ up to scalings. Letting $t=1$, we observe that $c \mapsto (-c,-\frac{c^2}{2},1)$ forms a part of parabola $\chi$ in $\rr^3$. Hence this one parameter family of lines forms part of a cone $C_1$ in $\rr^3$, i.e.
$$ C_1=\{ (x,y,z) \in \rr^3\ | \ x^2=-2yz \}. $$
Moreover, the intersection of $\Gamma$ and the unit sphere in $\rr^3$ is contained in a circle and can be parameterised as
$$\wz \gz(c) = \bigg \{ \frac{2}{2+c^2}(-c,-\frac{c^2}{2},1) \ | \ c\in [c_0,c_0+1/4] \bigg \}.$$
We see the arc $\wz \gz$ and the parabola $\chi$ are both conical curves, they can be included in one same cone as Figure \ref{f3} depicts.

\vspace*{7pt}
\begin{figure}[h]
\centering
\includegraphics[width=12cm]{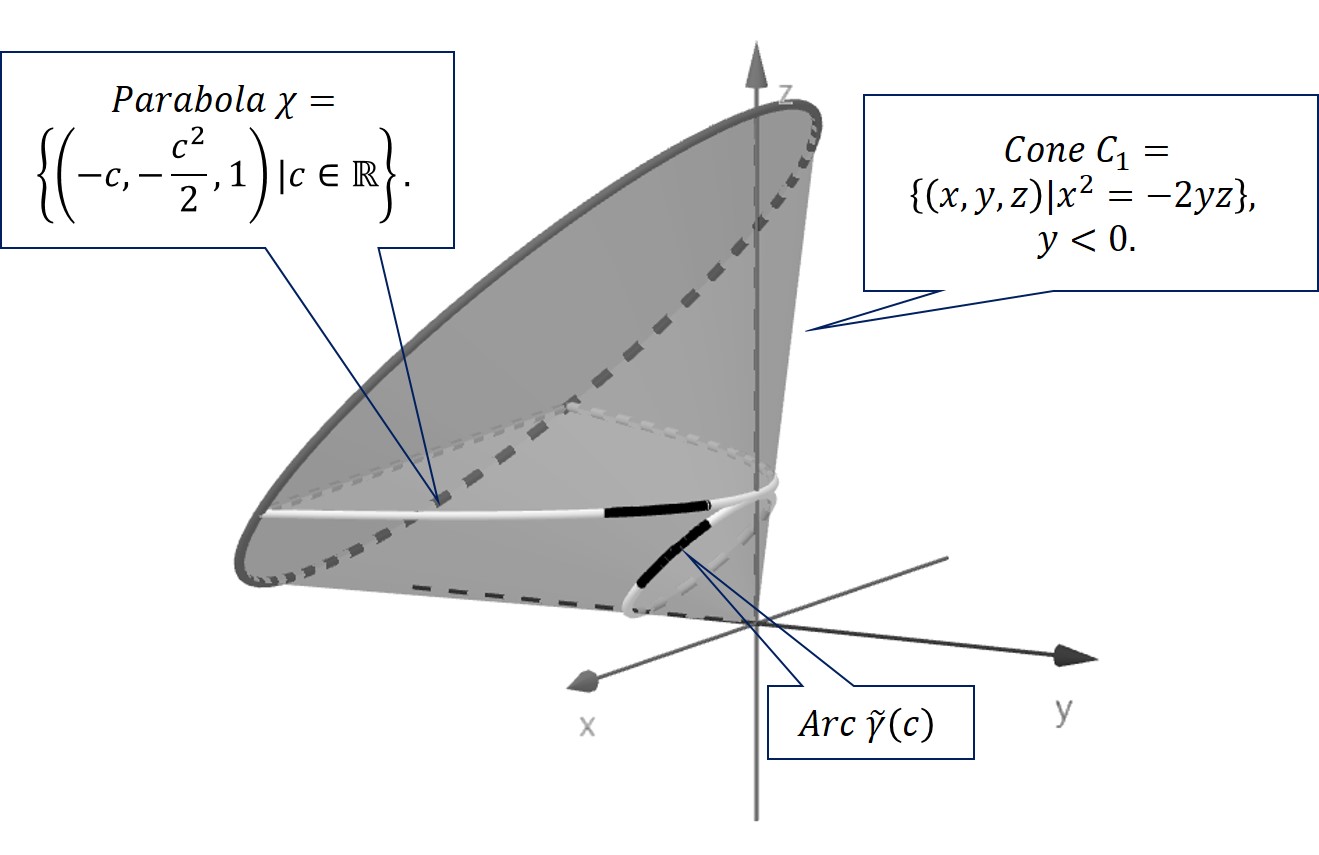}
\caption  {Parabola $\chi$ and Arc $\wz \gz$ can be included in one same cone}
\label{f3}
\end{figure}

\bigskip
\emph{Step 3. Conclusion.}  \\
In Theorem \ref{thm2}, the family of lines passing through the origin and  $\gz(\tz)=\frac1{\sqrt{2}}(\cos \tz, \sin \tz ,1)$ also spans a cone $C_2=\{ (x,y,z) \in \rr^3\ | \ x^2+y^2=z^2\}$ in $\rr^3$.

We observe that the cone $C_2$ can be obtained by a rotation $\car: (x,y,z)\mapsto(x,\frac{\sqrt{2}}{2}(y+z),\frac{\sqrt{2}}{2}(z-y))$ acting on $C_1$ and $\wz \gz$ is mapped to an arc of $\gz$, i.e.
$$ \gz(\tz(c)) \!= \!\car \circ \wz \gz(c)\! = \!\frac{2}{2+c^2}\lf (\! -c,\frac{\sqrt{2}}{4}(2-c^2),\frac{\sqrt{2}}{4}(2+c^2) \! \lr ) \! = \! \frac{1}{\sqrt{2}} \lf ( \frac{-2\sqrt{2}c}{2+c^2}, \frac{2-c^2}{2+c^2},1 \lr )\hspace{-1mm}, \, c \in \! [c_0,c_0+\frac14],$$
where $\tz(c)$ is determined from
$$ (\cos(\tz(c)),\sin(\tz(c)))= \lf ( \frac{-2\sqrt{2}c}{2+c^2}, \frac{2-c^2}{2+c^2} \lr ). $$
 This implies
 \begin{equation}\label{ac}
   \rho_{(-c, -\frac{c^2}{2},1)}((x,y,z))= \rho_{\gz(\tz(c))}(\car(x,y,z)), \quad \forall (x,y,z)\in \rr^3.
 \end{equation}


By \textsc{Claim III} and \eqref{dd2}, we know $B$ is Borel and $\dim_{\ch}^{\rr}(B) \ge 1$.  We use \eqref{ac} and apply Theorem \ref{thm2} to the family of lines passing through the origin and $\{\gz(\tz(c))\}_{c\in[c_0,c_0+1/4]}$ to deduce that
\begin{equation}\label{d2}
  \dim_{\ch}^{\rr}[\rho_{(-c, -\frac{c^2}{2},1)}(B )] = \dim_{\ch}^{\rr}[\rho_{\gz(\tz(c))}(\car(B) )]  =1 \quad a.e. \ c\in [c_0,c_0+1/4].
\end{equation}
Recalling the definition of $\varphi$ in \eqref{phi} and according to \eqref{ko}, we know $\varphi$ is $1$-Lipschitz with respect to $d_\hh$ and for any set $A \subset t$-axis, $$\dim_\ch^\hh(A)=2\dim_\ch^\rr(A).$$
Hence for any $c \in [c_0,c_0+1/4]$ such that \eqref{d2} holds, combining \eqref{lc}, \eqref{phi2}, \eqref{d2} and the above equality, we conclude
\begin{align*}
  \dim_{\ch}^{\hh}(\{x=c\} \cap E) & \ge \dim_{\ch}^{\hh}(E_c)= \dim_{\ch}^{\hh}(E_c^1) \ge \dim_{\ch}^{\hh}(\phi(E_c^1))\\
   &=2\dim_{\ch}^{\rr}(\rho_{(-1, \frac{c}{2},c^2)}(B ))\\
   &=2
\end{align*}
and for any $0<\az <2$, we deduce
$$  \ch_\hh^\az (\{x=c\} \cap E) =\fz.$$
By definition of $d_\hh$, the map $f:(\hh,d_\hh) \to \rr, (x,y,t) \to (x,0,0)$ is $1$-Lipschitz. Now letting $X=\hh$, $Y=[c_0,c_0+1/4]$ and $F=E\cap\{(x,y,t)\in \hh \ | \ x \in [c_0,c_0+1/4] \}$ in Theorem \ref{thm3}, for any $0<\az <2$, we derive
$$  \ch_\hh^{\az+1} (F) \ge \int^\ast_{[c_0,c_0+1/4]} \ch_\hh^\az (F \cap f^{-1}(y)) \, dy =\fz ,$$
which implies
$$ \dim_{\ch}^{\hh}(E)\ge \dim_{\ch}^{\hh}(E\cap\{(x,y,t)\in \hh \ | \ x \in [c_0,c_0+1/4] \}) =\dim_{\ch}^{\hh}(F) \ge 3.$$
We finish the proof.
\end{proof}

\noindent Jiayin Liu

\noindent
Department of Mathematics and Statistics, University of Jyv\"{a}skyl\"{a}.

\noindent{\it E-mail }:  \texttt{jiayin.mat.liu@jyu.fi}

\end{document}